
\documentclass[reqno,12pt]{amsart}
\usepackage{amsfonts}

\usepackage{amsmath}
\usepackage{graphicx}
\usepackage{amscd}


\setcounter{MaxMatrixCols}{10}
\newtheorem{theorem}{Theorem}
\theoremstyle{plain}
\newtheorem{acknowledgement}{Acknowledgement}

\newtheorem{corollary}{Corollary}

\newtheorem{remark}{Remark}

\numberwithin{equation}{section}

\setlength{\textwidth}{6.5in}
\setlength{\textheight}{8.5in}
\setlength{\oddsidemargin}{0.0in}
\setlength{\evensidemargin}{0.0in}
\input{tcilatex}

\begin{document}
\author{}
\title{}
\maketitle

\begin{center}
\thispagestyle{empty}\textbf{{\Large {COMPLETE SUM OF PRODUCTS OF }}}$%
\mathbf{{\Large {(h,q)}}}$\textbf{{\Large {-EXTENSION OF EULER \textbf{%
POLYNOMIALS AND }NUMBERS}}}

\bigskip

\textbf{{\Large {Yilmaz\ SIMSEK}}}

\bigskip

\textit{{\Large {University of Akdeniz, Faculty of Arts and Science,
Department of Mathematics,}}}{\Large {\textit{\ 07058 Antalya}, Turkey}}

{\Large 
}

{\Large \textbf{E-Mail: simsek@akdeniz.edu.tr}}

{\Large \ }

\textbf{{\Large {Abstract}}}
\end{center}

By using the fermionic $p$-adic $q$-Volkenborn integral, we construct
generating functions of higher-order $\left( h,q\right) $-extension of Euler
polynomials and numbers. By using these numbers and polynomials, we give new
approach to the complete sums of products of $(h,q)$-extension of Euler
polynomials and numbers one of which is given by the following form:%
\begin{equation*}
E_{n,q}^{(h,v)}(y_{1}+y_{2}+...+y_{v})=\sum_{%
\begin{array}{c}
l_{1},l_{2},...,l_{v}\geq 0 \\ 
l_{1}+l_{2}+...+l_{v}=n%
\end{array}%
}\left( 
\begin{array}{c}
n \\ 
l_{1},l_{2},...,l_{v}%
\end{array}%
\right) \prod_{j=1}^{v}E_{l_{j},q}^{(h)}(y_{j}),
\end{equation*}%
where $\left( 
\begin{array}{c}
n \\ 
l_{1},l_{2},...,l_{v}%
\end{array}%
\right) $ are the multinomial coefficients and $E_{m,q}^{(h)}(y)$ is the $%
(h,q)$-extension of Euler polynomials. Furhermore, we define some identities
involving $(h,q)$-extension of Euler polnomials and numbers.

\noindent \textbf{2000 Mathematics Subject Classification.} 05A10,11B65,
28B99,11B68.

\noindent \textbf{Key Words and Phrases.} $p$-adic Volkenborn integral, $q$%
-Euler numbers and polynomials, Multinomial Theorem.

\section{Introduction, Definitions and Notations}

The main aim of this paper is to study higher-order $(h,q)$-extension of
Euler numbers and polynomials. Bernoulli and Euler numbers and polynomials
were studied by many authors (see for detail \cite{hacerSimsek}, \cite%
{hacersimsekcangul}, \cite{Kim16}, \cite{Kim2002}, \cite{Kim2006}, \cite%
{Kim2007b}, \cite{kimJCAAA2007}, \cite{kimjnmp2007}, \cite{Kim-Rim2007}, %
\cite{kimRJMPEuler}, \cite{simBKMS}, \cite{simJNT2005}, \cite{Simsek2006a}, %
\cite{Srivastava}, \cite{TKIMmodf}) . We introduce some of them here. In %
\cite{Kim2002}, \cite{kimRJMPEuler}, \cite{Kim2006}, Kim construced $p$-adic 
$q$-Volkenborn integral identities. By using these identities, he proved $p$%
-adic $q$-integral representation of $q$-Euler and Bernoulli numbers\ and
polynomials. In \cite{simBKMS}, \cite{simJNT2005}, we constructed generating
functions of $q$-generalized Euler numbers and polynomials and twisted\ $q$%
-generalized Euler numbers and polynomials. We also constructed a complex
analytic twisted $l$-series which is interpolated twisted $q$-Euler numbers
at non-positive integers. In \cite{Kim16}, by using $q$-Volkenborn
integration, Kim constructed the new $(h,q)$-extension of the Bernoulli
numbers and polynomials. He defined $(h,q)$-extension of the zeta functions
which are interpolated new $(h,q)$-extension of the Bernoulli numbers and
polynomials. In \cite{Simsek2006a}, we defined twisted $(h,q)$-Bernoulli
numbers, zeta functions and $L$-function. He also gave relations between
these functions and numbers.

By the same motivation of the above studies, in this paper, we construct new
approach to the complete sums of products of $(h,q)$-extension of Euler
polynomials and numbers. By the Multinomial Theorem and multinomial
relations, we find new identities related to these polynomials and numbers.

Throughout this paper $\mathbb{Z},$ $\mathbb{Z}_{p},$ $\mathbb{Q}_{p}$ and $%
\mathbb{C}_{p}$ will be denoted by the ring of rational integers, the ring
of $p$-adic integers, the field of $p$-adic rational numbers and the
completion of the algebraic closure of $\mathbb{Q}_{p},$ respectively. Let $%
v_{p}$ be the normalized exponential valuation of $\mathbb{C}_{p}$ with $%
\mid p\mid _{p}=p^{-v_{p}(p)}=p^{-1}.$ When one talks of $q$-extension, $q$
is variously considered as an indeterminate, a complex number $q\in \mathbb{C%
}$, or $p$-adic number $q\in \mathbb{C}_{p}.$\ If $q\in \mathbb{C}_{p},$
then we normally assume 
\begin{equation*}
\mid q-1\mid _{p}<p^{-\frac{1}{p-1}},
\end{equation*}%
so that%
\begin{equation*}
q^{x}=\exp (x\log q)\text{ for}\mid x\mid _{p}\leq 1.
\end{equation*}%
If $q\in \mathbb{C}$, then we normally assume$\mid q\mid <1$. cf. (\cite%
{Kim2002}, \cite{jang}, \cite{Kim2006}, \cite{Kim2007b}, \cite{kimJCAAA2007}%
, \cite{kimRJMPEuler}, \cite{Simsek2006a}).We use the notations as%
\begin{equation*}
\left[ x\right] _{q}=\frac{1-q^{x}}{1-q},\text{ \ }\left[ x\right] _{-q}=%
\frac{1-\left( -q\right) ^{x}}{1+q}.
\end{equation*}%
Let $UD\left( \mathbb{Z}_{p}\right) $ be the set of uniformly differentiable
function on $\mathbb{Z}_{p}$. For $f\in UD\left( \mathbb{Z}_{p}\right) $, Kim%
\cite{Kim2002} defined the $p$-adic invariant $q$-integral on $\mathbb{Z}%
_{p} $ as follows:%
\begin{equation*}
I_{q}\left( f\right) =\int\limits_{\mathbb{Z}_{p}}f\left( x\right) d\mu
_{q}\left( x\right) =\underset{N\rightarrow \infty }{\lim }\frac{1}{\left[
p^{N}\right] _{q}}\sum_{x=0}^{p^{N}-1}f\left( x\right) q^{x},
\end{equation*}%
where $N$ is a fixed natural number. The bosonic integral was considered
from a physical point of view to the bosonic limit $q\rightarrow 1$, as
follows:%
\begin{equation*}
I_{1}\left( f\right) =\underset{q\rightarrow 1}{\lim }I_{q}\left( f\right) 
\text{, cf.\ (\cite{Kim2002}, \cite{Kim2006}, \cite{Kim2007b}, , \cite%
{kimRimJMAA2007}).}
\end{equation*}%
We consider the fermionic integral in contrast to the conventional bosonic,
which is called the $q$-deformed fermionic integral on $\mathbb{Z}_{p}.$
That is%
\begin{equation*}
I_{-q}\left( f\right) =\underset{q\rightarrow -q}{\lim }I_{q}\left( f\right)
=\int_{\mathbb{Z}_{p}}f\left( x\right) d\mu _{-q}\left( x\right) \text{,
cf.\ (\cite{Kim2006}, \cite{Kim2007b}, \cite{kimRJMPEuler}, \cite%
{kimRimJMAA2007}).}
\end{equation*}%
Recently, twisted $\left( h,q\right) $-Bernoulli and Euler numbers and
polynomials were studied by several authors cf.(see \cite%
{Kim-Jang-Rim-Pak2003}, \cite{jang}, \cite{simBKMS}, \cite{simJNT2005}, \cite%
{Simsek2006a}, \cite{TKIMmodf}, \cite{kimjnmp2007}, \cite{Kim-Rim2007}).

By definition of $\mu _{-q}(x)$, we see that 
\begin{equation}
I_{-1}(f_{1})+I_{-1}(f)=2f(0),\text{ cf. \cite{Kim2006},}  \label{5}
\end{equation}%
where $f_{1}(x)=f(x+1)$.

In \cite{hacerSimsek}, Ozden and Simsek defined new $(h,q)$-extension of
Euler numbers and polynomials. By using derivative operator to these
functions, we derive $\left( h,q\right) $-extension of zeta functions and $l$%
-functions, which interpolate $(h,q)$-extension of Euler numbers at negative
integers.

$(h,q)$-extension of Euler polynomials, $E_{n,q}^{(h)}(x)$ are defined by%
\begin{equation}
F_{q}^{h}(t,x)=F_{q}^{h}(t)e^{tx}=\frac{2e^{tx}}{q^{h}e^{t}+1}%
=\sum_{n=0}^{\infty }E_{n,q}^{(h)}(x)\frac{t^{n}}{n!}\text{ cf. \cite%
{hacerSimsek}.}  \label{aa10}
\end{equation}%
For $x=0$, we have%
\begin{equation}
F_{q}^{h}(t)=\frac{2e^{tx}}{q^{h}e^{t}+1}=\sum_{n=0}^{\infty }E_{n,q}^{(h)}%
\frac{t^{n}}{n!}\text{ cf. \cite{hacerSimsek}.}  \label{11}
\end{equation}

\begin{theorem}
(\cite{hacerSimsek})\label{theo01a}(Witt formula)For $h\in 
\mathbb{Z}
$, $q\in \mathbb{C}_{p}$ with $\left| 1-q\right| _{p}<p^{-\frac{1}{p-1}}$.%
\begin{equation}
\int_{\mathbb{Z}_{p}}q^{hx}x^{n}d\mu _{-1}(x)=E_{n,q}^{(h)},  \label{10}
\end{equation}%
\begin{equation}
\int_{\mathbb{Z}_{p}}q^{hy}(x+y)^{n}d\mu _{-1}(y)=E_{n,q}^{(h)}(x).
\label{10-1}
\end{equation}
\end{theorem}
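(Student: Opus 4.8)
The plan is to prove the Witt-type formulas \eqref{10} and \eqref{10-1} by directly evaluating the fermionic $p$-adic integral $\int_{\mathbb{Z}_p} q^{hy}(x+y)^n\, d\mu_{-1}(y)$ and matching the result against the generating-function definition \eqref{aa10}. The cleanest route is to sum a generating series first and then extract coefficients: I would compute $\int_{\mathbb{Z}_p} q^{hy} e^{(x+y)t}\, d\mu_{-1}(y)$ in closed form, show it equals $F_q^h(t,x) = \dfrac{2e^{tx}}{q^h e^t + 1}$, and then identify the coefficient of $t^n/n!$ on each side. Since \eqref{10} is the special case $x=0$ of \eqref{10-1}, proving the polynomial identity suffices and \eqref{10} follows immediately.

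First I would recall the defining relation for the fermionic integral, namely the functional equation \eqref{5}, which in integral form reads $\int_{\mathbb{Z}_p} f(x+1)\, d\mu_{-1}(x) + \int_{\mathbb{Z}_p} f(x)\, d\mu_{-1}(x) = 2f(0)$. Applying this to the test function $f(y) = q^{hy} e^{(x+y)t}$ gives $f(y+1) = q^{h} e^{t}\, f(y)$, so the functional equation becomes
\begin{equation*}
(q^h e^t + 1)\int_{\mathbb{Z}_p} q^{hy} e^{(x+y)t}\, d\mu_{-1}(y) = 2 e^{xt}.
\end{equation*}
Solving for the integral yields exactly $\dfrac{2e^{xt}}{q^h e^t + 1} = F_q^h(t,x)$, which is the generating function in \eqref{aa10}.

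Next I would expand both sides as power series in $t$. On the right, by \eqref{aa10}, the coefficient of $t^n/n!$ is $E_{n,q}^{(h)}(x)$. On the left, expanding $e^{(x+y)t} = \sum_{n=0}^{\infty} (x+y)^n \frac{t^n}{n!}$ and interchanging sum and integral gives $\int_{\mathbb{Z}_p} q^{hy} e^{(x+y)t}\, d\mu_{-1}(y) = \sum_{n=0}^{\infty}\left(\int_{\mathbb{Z}_p} q^{hy}(x+y)^n\, d\mu_{-1}(y)\right)\frac{t^n}{n!}$. Comparing coefficients of $t^n/n!$ establishes \eqref{10-1}, and setting $x=0$ gives \eqref{10}.

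The main obstacle is the justification of interchanging the infinite sum with the $p$-adic fermionic integral, together with verifying that the test function $f(y) = q^{hy} e^{(x+y)t}$ genuinely lies in $UD(\mathbb{Z}_p)$ so that the integral and the functional equation \eqref{5} apply. This requires the convergence hypothesis $|1-q|_p < p^{-1/(p-1)}$ (ensuring $q^{hy}$ is well defined and uniformly differentiable) and a restriction on $t$ guaranteeing $p$-adic convergence of the exponential series; these conditions are precisely what is assumed in the hypotheses, so the interchange is legitimate termwise. Everything else is a routine coefficient comparison once the closed form of the integral is in hand.
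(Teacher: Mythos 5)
Your proposal is correct and matches the standard derivation: the paper states this theorem without proof (citing \cite{hacerSimsek}), and the argument there is exactly yours --- apply the fermionic integral identity (\ref{5}) to $f(y)=q^{hy}e^{(x+y)t}$, use $f(y+1)=q^{h}e^{t}f(y)$ to solve for the integral as $2e^{xt}/(q^{h}e^{t}+1)$, and compare coefficients of $t^{n}/n!$ with (\ref{aa10}), the case $x=0$ giving (\ref{10}). Your remarks on why the hypothesis $\left|1-q\right|_{p}<p^{-\frac{1}{p-1}}$ and a smallness restriction on $|t|_{p}$ legitimize the termwise integration are the right technical caveats and complete the argument.
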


\begin{theorem}
(\cite{hacerSimsek})\label{teo2}(Distribution Relation) For $d$ is an odd
positive integer, $k\in \mathbb{N}$, we have 
\begin{equation*}
E_{k,q}^{(h)}(x,q)=d^{k}\sum_{a=0}^{d-1}(-1)^{a}q^{ha}E_{k,q^{d}}^{(h)}%
\left( \frac{x+a}{d}\right) .
\end{equation*}
\end{theorem}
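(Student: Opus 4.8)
The plan is to prove the identity at the level of generating functions and then read off the coefficient of $s^{k}/k!$. The right-hand side is a combination of $(h,q^{d})$-Euler polynomials, so I would start from the generating function (\ref{aa10}) with $q$ replaced by $q^{d}$, argument $(x+a)/d$, and a formal variable $s$:
\begin{equation*}
F_{q^{d}}^{h}(s,(x+a)/d)=\frac{2e^{s(x+a)/d}}{q^{dh}e^{s}+1}=\sum_{k=0}^{\infty }E_{k,q^{d}}^{(h)}\left( \frac{x+a}{d}\right) \frac{s^{k}}{k!}.
\end{equation*}
Multiplying by $(-1)^{a}q^{ha}$ and summing over $0\le a\le d-1$, I would pull the $a$-independent factors out of the fraction to obtain
\begin{equation*}
\sum_{a=0}^{d-1}(-1)^{a}q^{ha}F_{q^{d}}^{h}(s,(x+a)/d)=\frac{2e^{sx/d}}{q^{dh}e^{s}+1}\sum_{a=0}^{d-1}\left( -q^{h}e^{s/d}\right) ^{a}.
\end{equation*}

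The inner sum is a finite geometric series with ratio $r=-q^{h}e^{s/d}$, hence equals $(1-r^{d})/(1-r)$, and this is exactly where the hypothesis that $d$ is \emph{odd} enters decisively. Since $(-1)^{d}=-1$, one has $r^{d}=-q^{dh}e^{s}$, so the series equals $(1+q^{dh}e^{s})/(1+q^{h}e^{s/d})$. The factor $1+q^{dh}e^{s}=q^{dh}e^{s}+1$ then cancels the denominator of the prefactor, leaving precisely
\begin{equation*}
\sum_{a=0}^{d-1}(-1)^{a}q^{ha}F_{q^{d}}^{h}(s,(x+a)/d)=\frac{2e^{sx/d}}{q^{h}e^{s/d}+1}=F_{q}^{h}(s/d,x).
\end{equation*}

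Finally I would expand the right-hand side again by (\ref{aa10}), namely $F_{q}^{h}(s/d,x)=\sum_{k\ge 0}E_{k,q}^{(h)}(x)\,d^{-k}s^{k}/k!$, and compare the coefficient of $s^{k}/k!$ on the two sides. This yields $\sum_{a}(-1)^{a}q^{ha}E_{k,q^{d}}^{(h)}((x+a)/d)=d^{-k}E_{k,q}^{(h)}(x)$, which is the asserted relation after multiplying through by $d^{k}$. The only genuine subtlety—and the step I expect to be the crux—is the geometric-series cancellation: it works exactly because $d$ is odd, since for even $d$ the sign would turn the numerator into $1-q^{dh}e^{s}$, which would fail to cancel $q^{dh}e^{s}+1$ and the identity would collapse. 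Thus the oddness of $d$ is not a technical convenience but the structural heart of the argument. (An alternative route extracts the same relation from the Witt formula (\ref{10-1}) via the distribution property of the fermionic measure $\mu_{-1}$ underlying (\ref{5}), but the generating-function computation above is the most direct.)
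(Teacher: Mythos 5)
Your proof is correct. Note first that the paper itself offers no proof of this theorem: it is quoted from \cite{hacerSimsek} with the proof deferred to that reference, so there is no in-paper argument to compare against. Your generating-function computation is sound and complete: the telescoping is exactly as you say, since with ratio $r=-q^{h}e^{s/d}$ the geometric sum gives $\frac{1-r^{d}}{1-r}=\frac{1+q^{dh}e^{s}}{1+q^{h}e^{s/d}}$ precisely because $(-1)^{d}=-1$ for odd $d$, the factor $q^{dh}e^{s}+1$ cancels, and comparing coefficients of $s^{k}/k!$ in $F_{q}^{h}(s/d,x)=\sum_{k}E_{k,q}^{(h)}(x)d^{-k}s^{k}/k!$ yields the claim. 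You have also correctly diagnosed the role of oddness (for even $d$ the numerator becomes $1-q^{dh}e^{s}$ and the cancellation fails), and you implicitly and rightly read the misprinted left-hand side $E_{k,q}^{(h)}(x,q)$ as $E_{k,q}^{(h)}(x)$. Within the framework this paper actually develops, the more native route is the one you mention in passing: apply the Witt formula (\ref{10-1}) and the distribution property of the fermionic measure $\mu_{-1}$, splitting $\mathbb{Z}_{p}$ into the residue classes $a+d\mathbb{Z}_{p}$, $0\leq a\leq d-1$, which produces the alternating factor $(-1)^{a}q^{ha}$ and the rescaled argument $\frac{x+a}{d}$ directly at the level of the $p$-adic integral; that argument buys uniformity with the paper's other proofs (all of which manipulate the integral representation), while your generating-function proof is more elementary and makes the necessity of odd $d$ transparent in a single line. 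Either is acceptable; yours is complete as written.
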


\section{Higher-order $(h,q)$-Euler polynomials and numbers}

Our main purpose in this section is to give complete sums of products of $%
(h,q)$-Euler polynomials and numbers. We define $(h,q)$-Euler polynomials
and numbers of higher-order, $E_{n,q}^{(h,k)}(z)$ and $E_{n,q}^{(h,v)}\in 
\mathbb{C}_{p}$, respectively, by making use multiple of\ $p$-adic $q$%
-integral on $\mathbb{Z}_{p}$ in the fermionic sense:%
\begin{eqnarray}
&&\underset{v-times}{\underbrace{\int_{\mathbb{Z}_{p}}...\int_{\mathbb{Z}%
_{p}}}}q^{\sum_{j=1}^{v}hx_{j}}\exp \left( t\sum_{j=1}^{v}x_{j}\right)
\prod_{j=1}^{v}d\mu _{-1}(x_{j})  \notag \\
&=&\sum_{n=0}^{\infty }E_{n,q}^{(h,v)}\frac{t^{n}}{n!},  \label{askk-1}
\end{eqnarray}%
where 
\begin{equation*}
\prod_{j=1}^{v}d\mu _{-1}(x_{j})=d\mu _{-1}(x_{1})d\mu _{-1}(x_{2})...d\mu
_{-1}(x_{v}).
\end{equation*}%
By using Taylor series of $\exp (tx)$ in the above equation, we have 
\begin{eqnarray*}
&&\sum_{n=0}^{\infty }\left( \int_{\mathbb{Z}_{p}}...\int_{\mathbb{Z}%
_{p}}q^{\sum_{j=1}^{v}hx_{j}}\left( \sum_{j=1}^{v}x_{j}\right)
^{n}\prod_{j=1}^{v}d\mu _{-1}(x_{j})\right) \frac{t^{n}}{n!} \\
&=&\sum_{n=0}^{\infty }E_{n,q}^{(h,k)}\frac{t^{n}}{n!}\text{.}
\end{eqnarray*}%
By comparing coefficients $\frac{t^{n}}{n!}$ in the above equation, we
arrive at the following theorem.

\begin{theorem}
\label{new-theorem1}For positive integers $n$, $v$, and $h\in \mathbb{Z}$,we
have%
\begin{equation}
E_{n,q}^{(h,v)}=\int_{\mathbb{Z}_{p}}...\int_{\mathbb{Z}_{p}}q^{%
\sum_{j=1}^{v}hx_{j}}\left( \sum_{j=1}^{v}x_{j}\right)
^{n}\prod_{j=1}^{v}d\mu _{-1}(x_{j}).  \label{askk-1a}
\end{equation}
\end{theorem}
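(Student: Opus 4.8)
The plan is to obtain (\ref{askk-1a}) by reading off Taylor coefficients from the generating-function definition (\ref{askk-1}). First I would expand the exponential inside the $v$-fold integral as its Maclaurin series, $\exp\left(t\sum_{j=1}^{v}x_{j}\right)=\sum_{n=0}^{\infty}\left(\sum_{j=1}^{v}x_{j}\right)^{n}\frac{t^{n}}{n!}$, substitute it into the left-hand side of (\ref{askk-1}), and interchange the summation over $n$ with the iterated fermionic $p$-adic integral $\int_{\mathbb{Z}_{p}}\cdots\int_{\mathbb{Z}_{p}}$. This turns the left-hand side into
\begin{equation*}
\sum_{n=0}^{\infty}\left(\int_{\mathbb{Z}_{p}}\cdots\int_{\mathbb{Z}_{p}}q^{\sum_{j=1}^{v}hx_{j}}\left(\sum_{j=1}^{v}x_{j}\right)^{n}\prod_{j=1}^{v}d\mu_{-1}(x_{j})\right)\frac{t^{n}}{n!}.
\end{equation*}
Since the right-hand side of (\ref{askk-1}) is by definition $\sum_{n=0}^{\infty}E_{n,q}^{(h,v)}\frac{t^{n}}{n!}$, equating the coefficients of $\frac{t^{n}}{n!}$ in the two power series yields precisely the asserted formula (\ref{askk-1a}). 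This is exactly the computation sketched immediately before the theorem statement, so the proof essentially amounts to recording that coefficient comparison.

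The only step that genuinely needs justification is the term-by-term integration, i.e. the exchange of $\sum_{n}$ with the multiple integral. I would handle this in one of two ways. Algebraically, one may read (\ref{askk-1}) as an identity of formal power series in $t$ over $\mathbb{C}_{p}$, in which case each coefficient of $\frac{t^{n}}{n!}$ is \emph{defined} by applying the linear functional $I_{-1}^{\otimes v}$ to the single monomial $q^{\sum_{j=1}^{v}hx_{j}}\left(\sum_{j=1}^{v}x_{j}\right)^{n}$, and no analytic convergence issue arises. Analytically, since every $x_{j}\in\mathbb{Z}_{p}$ satisfies $\left|x_{j}\right|_{p}\leq 1$, the factor $\left(\sum_{j=1}^{v}x_{j}\right)^{n}$ is bounded uniformly in the $x_{j}$, so for $\left|t\right|_{p}$ sufficiently small the partial sums form a uniformly convergent series in $UD(\mathbb{Z}_{p})$, and the continuity and linearity of $I_{-1}$ license the interchange.

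Either way the exchange is routine, and I expect it to be the main — though minor — obstacle; the remainder is immediate from the definition together with the Witt-type representation already recorded in \thmref{theo01a}, which is the $v=1$ case of the present statement.
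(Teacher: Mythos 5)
Your proposal follows exactly the paper's own argument: expand $\exp\left(t\sum_{j=1}^{v}x_{j}\right)$ in its Taylor series inside the $v$-fold fermionic integral in (\ref{askk-1}) and compare coefficients of $\frac{t^{n}}{n!}$, the paper leaving the interchange of sum and integral implicit where you justify it. Your added remarks on formal-power-series versus analytic justification are sound but go beyond what the paper records; the approach is the same.
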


By (\ref{askk-1}), $(h,q)$-Euler numbers of higher-order, $E_{n,q}^{(h,v)}$
are defined by means of the following generating function 
\begin{equation*}
\left( \frac{2}{q^{h}e^{t}+1}\right) ^{v}=\sum_{n=0}^{\infty }E_{n,q}^{(h,v)}%
\frac{t^{n}}{n!}\text{.}
\end{equation*}

Observe that for $v=1$, the above equation reduces to (\ref{11}).

\begin{eqnarray*}
&&\underset{v-times}{\underbrace{\int_{\mathbb{Z}_{p}}...\int_{\mathbb{Z}%
_{p}}}}q^{\sum_{j=1}^{v}hx_{j}}\exp \left( tz+\sum_{j=1}^{v}tx_{j}\right)
\prod_{j=1}^{v}d\mu _{-1}(x_{j}) \\
&=&\sum_{n=0}^{\infty }E_{n,q}^{(h,v)}(z)\frac{t^{n}}{n!}\text{.}
\end{eqnarray*}%
By using Taylor series of $\exp (tx)$ in the above equation, we have 
\begin{eqnarray*}
&&\sum_{n=0}^{\infty }\left( \int_{\mathbb{Z}_{p}}...\int_{\mathbb{Z}%
_{p}}q^{\sum_{j=1}^{v}hx_{j}}\left( z+\sum_{j=1}^{v}x_{j}\right)
^{n}\prod_{j=1}^{v}d\mu _{-1}(x_{j})\right) \frac{t^{n}}{n!} \\
&=&\sum_{n=0}^{\infty }E_{n,q}^{(h,v)}(z)\frac{t^{n}}{n!}\text{.}
\end{eqnarray*}%
By comparing coefficients $\frac{t^{n}}{n!}$ in the above equation, we
arrive at the following theorem.

\begin{theorem}
\label{new-theorem2}For $z\in \mathbb{C}_{p}$ and positive integers $n$, $v$%
, and $h\in \mathbb{Z}$, we have%
\begin{equation}
E_{n,q}^{(h,v)}(z)=\int_{\mathbb{Z}_{p}}...\int_{\mathbb{Z}%
_{p}}q^{\sum_{j=1}^{v}hx_{j}}\left( z+\sum_{j=1}^{v}x_{j}\right)
^{n}\prod_{j=1}^{v}d\mu _{-1}(x_{j})  \label{askk-1b}
\end{equation}
\end{theorem}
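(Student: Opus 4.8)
The plan is to obtain the integral representation directly from the generating-function definition of $E_{n,q}^{(h,v)}(z)$ displayed immediately before the statement, following the same route used to establish Theorem~\ref{new-theorem1}. First I would rewrite the exponent appearing in that generating function: since $tz+\sum_{j=1}^{v}tx_{j}=t\bigl(z+\sum_{j=1}^{v}x_{j}\bigr)$, the integrand is simply $q^{\sum_{j=1}^{v}hx_{j}}\exp\!\bigl(t(z+\sum_{j=1}^{v}x_{j})\bigr)$, which collapses the two groups of exponential factors into a single exponential of the shifted sum.

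Next I would expand the exponential in its Taylor series, $\exp\!\bigl(t(z+\sum_{j}x_{j})\bigr)=\sum_{n\ge 0}(z+\sum_{j}x_{j})^{n}\,t^{n}/n!$, and then interchange this summation with the $v$-fold fermionic integral. By linearity of $\int_{\mathbb{Z}_{p}}\cdots\,d\mu_{-1}$ in each of the $v$ variables, the left-hand side of the defining identity becomes $\sum_{n\ge0}\bigl(\int_{\mathbb{Z}_{p}}\cdots\int_{\mathbb{Z}_{p}}q^{\sum_{j}hx_{j}}(z+\sum_{j}x_{j})^{n}\prod_{j}d\mu_{-1}(x_{j})\bigr)t^{n}/n!$. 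Equating the coefficient of $t^{n}/n!$ with that on the right-hand side, namely $E_{n,q}^{(h,v)}(z)$, yields the asserted formula (\ref{askk-1b}).

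The only step demanding justification is the interchange of the infinite series with the multiple $p$-adic integral. This is legitimate under the standing hypothesis $|1-q|_{p}<p^{-1/(p-1)}$: that condition ensures $q^{hx_{j}}\in UD(\mathbb{Z}_{p})$ and that the partial sums of the exponential series converge uniformly on $\mathbb{Z}_{p}^{v}$, so the fermionic $q$-Volkenborn integral, being itself a uniform limit of the finite Riemann-type sums defining $I_{-1}$, commutes with the series limit. Once this interchange is granted the argument is purely formal coefficient-matching, exactly as in the single-variable Witt formula (\ref{10-1}); hence the convergence justification is the sole genuine obstacle, and for $v=1$ the statement recovers (\ref{10-1}) as a consistency check.
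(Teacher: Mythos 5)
Your proposal is correct and follows essentially the same route as the paper: the paper likewise starts from the generating-function definition with the exponential factor $\exp\left( tz+\sum_{j=1}^{v}tx_{j}\right) $, expands via the Taylor series of $\exp (tx)$, interchanges the series with the $v$-fold fermionic integral, and compares coefficients of $\frac{t^{n}}{n!}$ to obtain (\ref{askk-1b}). Your added remark justifying the interchange under the hypothesis $\left| 1-q\right| _{p}<p^{-\frac{1}{p-1}}$ is a welcome refinement of a step the paper treats purely formally, but it does not change the argument.
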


By (\ref{askk-1}), $(h,q)$-Euler polynomials of higher-order, $%
E_{n,q}^{(h,v)}(z)$ are defined by means of the following generating
function 
\begin{equation*}
F_{q,w}^{h,v}(z,t)=e^{tz}\left( \frac{2}{q^{h}e^{t}+1}\right)
^{v}=\sum_{n=0}^{\infty }E_{n,q}^{(h,v)}(z)\frac{t^{n}}{n!}\text{.}
\end{equation*}%
Note that when $v=1$, then we have (\ref{aa10}), when $h=1$, $q\rightarrow 1$%
, then we have%
\begin{equation*}
F^{1,v}(z,t)=e^{tz}\left( \frac{2}{e^{t}+1}\right) ^{v}=\sum_{n=0}^{\infty
}E_{n}^{(v)}(z)\frac{t^{n}}{n!}\text{,}
\end{equation*}%
where $E_{n}^{(v)}(z)$ are denoted classical higher-order Euler polynomials
cf. \cite{Srivastava}.

\begin{theorem}
\label{new-theorem3}For $z\in \mathbb{C}_{p}$ and positive integers $n$, $v$%
, and $h\in \mathbb{Z}$, we have%
\begin{equation}
E_{n,q}^{(h,v)}(z)=\sum_{l=0}^{n}\left( 
\begin{array}{c}
n \\ 
l%
\end{array}%
\right) z^{n-l}E_{l,q}^{(h,v)}.  \label{askk-1a0}
\end{equation}
\end{theorem}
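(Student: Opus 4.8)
The plan is to derive the identity directly from the multiple fermionic integral representation of Theorem~\ref{new-theorem2}, reducing it to an application of the binomial theorem together with the linearity of the $p$-adic integral. Write $S=\sum_{j=1}^{v}x_{j}$ for the sum of the integration variables. By \eqref{askk-1b} I would start from
\begin{equation*}
E_{n,q}^{(h,v)}(z)=\int_{\mathbb{Z}_{p}}\cdots \int_{\mathbb{Z}_{p}}q^{\sum_{j=1}^{v}hx_{j}}(z+S)^{n}\prod_{j=1}^{v}d\mu _{-1}(x_{j}).
\end{equation*}

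Next I would apply the ordinary binomial theorem to the integrand, treating $z\in \mathbb{C}_{p}$ as a constant and $S$ as the variable, so that
\begin{equation*}
(z+S)^{n}=\sum_{l=0}^{n}\binom{n}{l}z^{n-l}S^{l}.
\end{equation*}
Substituting this into the integral and interchanging the \emph{finite} sum with the multiple integral — which is legitimate because integration against each $d\mu _{-1}$ is linear and $z^{n-l}$ is constant in all the $x_{j}$ — pulls the scalar factor out and gives
\begin{equation*}
E_{n,q}^{(h,v)}(z)=\sum_{l=0}^{n}\binom{n}{l}z^{n-l}\int_{\mathbb{Z}_{p}}\cdots \int_{\mathbb{Z}_{p}}q^{\sum_{j=1}^{v}hx_{j}}S^{l}\prod_{j=1}^{v}d\mu _{-1}(x_{j}).
\end{equation*}

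Finally I would recognize the remaining $v$-fold integral as $E_{l,q}^{(h,v)}$ by Theorem~\ref{new-theorem1}, equation \eqref{askk-1a}, since it is exactly the $z=0$ specialization that defines the higher-order numbers. Inserting this identification yields the claimed formula \eqref{askk-1a0}.

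There is essentially no hard step here: the whole argument is the binomial expansion followed by termwise integration. The only point meriting a remark is the interchange of summation and integration, which is harmless because the sum is finite. As an alternative route avoiding integrals entirely, one could work purely on the generating-function side, writing $e^{tz}=\sum_{m\geq 0}z^{m}t^{m}/m!$ and multiplying it by the series $\sum_{n\geq 0}E_{n,q}^{(h,v)}t^{n}/n!$ for $\bigl(2/(q^{h}e^{t}+1)\bigr)^{v}$; comparing the coefficient of $t^{n}/n!$ in the Cauchy product produces the same binomial convolution, and hence the identity.
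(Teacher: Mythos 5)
Your proof is correct and is essentially identical to the paper's own argument: both expand $(z+S)^{n}$ by the binomial theorem inside the integral representation \eqref{askk-1b} and then identify the remaining $v$-fold integral with $E_{l,q}^{(h,v)}$ via \eqref{askk-1a}. The generating-function alternative you sketch is also valid, but the main route matches the paper step for step.
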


\begin{proof}
By using binomial expansion in (\ref{askk-1b}), we have%
\begin{equation*}
E_{n,q}^{(h,v)}(z)=\sum_{l=0}^{n}\left( 
\begin{array}{c}
n \\ 
l%
\end{array}%
\right) z^{n-l}\int_{\mathbb{Z}_{p}}...\int_{\mathbb{Z}_{p}}q^{%
\sum_{j=1}^{v}hx_{j}}\left( \sum_{j=1}^{v}x_{j}\right)
^{l}\prod_{j=1}^{v}d\mu _{-1}(x_{j}).
\end{equation*}%
By (\ref{askk-1a}) in the above, we arrive at the desired result.
\end{proof}

\section{The complete sums of products of $(h,q)$-extension of Euler
polynomials and numbers}

In this section, we prove main theorems on the complete sums of products of $%
(h,q)$-extension of Euler polynomials and numbers. Firstly, we need the
Multinomial Theorem, which is given as follows cf. (\cite{comtet}, \cite%
{knuth}):

\begin{theorem}
\label{theoremMultiNomial}(Multinomial Theorem)%
\begin{equation*}
\left( \sum_{j=1}^{v}x_{j}\right) ^{n}=\sum_{%
\begin{array}{c}
l_{1},l_{2},...,l_{v}\geq 0 \\ 
l_{1}+l_{2}+...+l_{v}=n%
\end{array}%
}\left( 
\begin{array}{c}
n \\ 
l_{1},l_{2},...,l_{v}%
\end{array}%
\right) \prod_{a=1}^{v}x_{a}^{l_{a}},
\end{equation*}%
where $\left( 
\begin{array}{c}
n \\ 
l_{1},l_{2},...,l_{v}%
\end{array}%
\right) $ are the multinomial coefficients, which are defined by%
\begin{equation*}
\left( 
\begin{array}{c}
n \\ 
l_{1},l_{2},...,l_{v}%
\end{array}%
\right) =\frac{n!}{l_{1}!l_{2}!...l_{v}!}.
\end{equation*}
\end{theorem}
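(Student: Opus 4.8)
The plan is to prove the Multinomial Theorem by induction on the number of variables $v$, with the ordinary binomial theorem serving as the engine of the inductive step. First I would dispose of the base case: for $v=1$ the asserted identity collapses to $(x_{1})^{n}=x_{1}^{n}$, since the only admissible index vector is $l_{1}=n$, whose multinomial coefficient is $\binom{n}{n}=1$. (One may equally well start at $v=2$, where the statement is precisely the binomial theorem, because $\binom{n}{l_{1},l_{2}}=\binom{n}{l_{1}}$ when $l_{1}+l_{2}=n$.)

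For the inductive step, assume the identity holds for $v-1$ variables and all nonnegative integer exponents. I would group the first $v-1$ variables together, writing $S:=\sum_{j=1}^{v-1}x_{j}$, so that
\[
\left( \sum_{j=1}^{v}x_{j}\right) ^{n}=\left( S+x_{v}\right) ^{n}.
\]
Applying the ordinary binomial theorem to $S$ and $x_{v}$ as the two summands gives
\[
\left( S+x_{v}\right) ^{n}=\sum_{l_{v}=0}^{n}\binom{n}{l_{v}}S^{\,n-l_{v}}x_{v}^{l_{v}}.
\]
I would then invoke the inductive hypothesis on each power $S^{\,n-l_{v}}$, expanding it as a multinomial sum over indices $l_{1},\dots ,l_{v-1}\geq 0$ with $l_{1}+\dots +l_{v-1}=n-l_{v}$.

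The two summations then merge: the outer sum over $l_{v}\in\{0,\dots,n\}$ together with the inner sum over $(l_{1},\dots,l_{v-1})$ subject to $l_{1}+\dots +l_{v-1}=n-l_{v}$ range exactly over all $(l_{1},\dots ,l_{v})$ with $l_{1}+\dots +l_{v}=n$. The coefficient of the monomial $\prod_{a=1}^{v}x_{a}^{l_{a}}$ is then the product $\binom{n}{l_{v}}\binom{n-l_{v}}{l_{1},\dots ,l_{v-1}}$, which simplifies by cancelling the factor $(n-l_{v})!$:
\[
\binom{n}{l_{v}}\binom{n-l_{v}}{l_{1},\dots ,l_{v-1}}=\frac{n!}{l_{v}!(n-l_{v})!}\cdot \frac{(n-l_{v})!}{l_{1}!\cdots l_{v-1}!}=\frac{n!}{l_{1}!\cdots l_{v}!}=\binom{n}{l_{1},\dots ,l_{v}}.
\]
This closes the induction.

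The main obstacle is purely bookkeeping: correctly reindexing the nested sums so that the constrained inner range assembles into the single symmetric constraint $l_{1}+\dots +l_{v}=n$, and verifying the binomial–multinomial cancellation above. Neither step is deep. As an alternative that avoids induction entirely, I could argue combinatorially: the expression $\left(\sum_{j=1}^{v}x_{j}\right)^{n}$ is the product of $n$ identical factors, and upon expansion each monomial $\prod_{a}x_{a}^{l_{a}}$ with $\sum_{a}l_{a}=n$ arises once for every assignment to the $n$ factors specifying which variable each contributes, with $x_{a}$ chosen in exactly $l_{a}$ of them; the number of such assignments is the multinomial coefficient $\tfrac{n!}{l_{1}!\cdots l_{v}!}$, giving the stated coefficient directly.
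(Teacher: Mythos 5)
Your proof is correct. Note, however, that the paper itself offers no proof of this statement at all: it is quoted as a classical fact with references to Comtet's \emph{Advanced Combinatorics} and Graham--Knuth--Patashnik's \emph{Concrete Mathematics}, and is used later only as a tool (in the proofs of Theorems \ref{new-theorem4} and \ref{new-theorem5}). So there is no in-paper argument to compare against; what you have supplied is the standard textbook proof that those references contain. Your induction on the number of variables $v$ is sound: the base case $v=1$ (or $v=2$, the binomial theorem) is handled correctly, the grouping $S=\sum_{j=1}^{v-1}x_{j}$ together with the binomial expansion of $(S+x_{v})^{n}$ is the right decomposition, the reindexing of the nested sums over $\{l_{v}\in\{0,\dots,n\}\}$ and $\{l_{1}+\dots+l_{v-1}=n-l_{v}\}$ into the single constraint $l_{1}+\dots+l_{v}=n$ is exact, and the coefficient identity
\begin{equation*}
\binom{n}{l_{v}}\cdot\frac{(n-l_{v})!}{l_{1}!\cdots l_{v-1}!}=\frac{n!}{l_{1}!\cdots l_{v}!}
\end{equation*}
closes the induction as claimed. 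Your alternative combinatorial argument (counting, for each monomial $\prod_{a}x_{a}^{l_{a}}$, the assignments of the $n$ factors to the $v$ variables) is likewise complete and is arguably the more illuminating route, since it explains \emph{why} the coefficient is $n!/(l_{1}!\cdots l_{v}!)$ rather than merely verifying it; the inductive route buys a purely algebraic proof that needs nothing beyond the binomial theorem. Either version would serve as a self-contained replacement for the paper's citation.
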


\begin{theorem}
\label{new-theorem4}For positive integers $n$, $v$, we have%
\begin{equation}
E_{n,q}^{(h,v)}=\sum_{%
\begin{array}{c}
l_{1},l_{2},...,l_{v}\geq 0 \\ 
l_{1}+l_{2}+...+l_{v}=n%
\end{array}%
}\left( 
\begin{array}{c}
n \\ 
l_{1},l_{2},...,l_{v}%
\end{array}%
\right) \prod_{j=1}^{v}E_{l_{j},q}^{(h)},  \label{askk-2a}
\end{equation}%
where $\left( 
\begin{array}{c}
n \\ 
l_{1},l_{2},...,l_{v}%
\end{array}%
\right) $ is the multinomial coefficient.
\end{theorem}

\begin{proof}
By using Theorem \ref{theoremMultiNomial} in (\ref{askk-1a}), we have%
\begin{eqnarray*}
&&E_{n,q}^{(h,v)} \\
&=&\sum_{%
\begin{array}{c}
l_{1},l_{2},...,l_{v}\geq 0 \\ 
l_{1}+l_{2}+...+l_{v}=n%
\end{array}%
}\left( 
\begin{array}{c}
n \\ 
l_{1},l_{2},...,l_{v}%
\end{array}%
\right) \prod_{j=1}^{v}\int_{\mathbb{Z}_{p}}q^{hx_{j}}x_{j}^{l_{j}}d\mu
_{-1}(x_{j}).
\end{eqnarray*}%
By (\ref{10}) in the above, we obtain the desired result.
\end{proof}

\begin{remark}
\begin{equation*}
\lim_{%
\begin{array}{c}
q\rightarrow 1 \\ 
h=1%
\end{array}%
}E_{n,q}^{(h,v)}=E_{n}^{(v)}=\sum_{%
\begin{array}{c}
l_{1},l_{2},...,l_{v}\geq 0 \\ 
l_{1}+l_{2}+...+l_{v}=n%
\end{array}%
}\left( 
\begin{array}{c}
n \\ 
l_{1},l_{2},...,l_{v}%
\end{array}%
\right) E_{l_{1}}E_{l_{2}}...E_{l_{v}},
\end{equation*}%
where $\left( 
\begin{array}{c}
n \\ 
l_{1},l_{2},...,l_{v}%
\end{array}%
\right) $ is the multinomial coefficient, and $E_{l_{j}}$, $1\leq j\leq v$,
are denoted classical Euler numbers.
\end{remark}

By substituting (\ref{askk-2a}) into (\ref{askk-1a0}), after some elementary
calculations, we arrive at the following corollary:

\begin{corollary}
\label{corollary1}For $z\in \mathbb{C}_{p}$ and positive integers $n$, $v$,
we have%
\begin{equation*}
E_{n,q}^{(h,v)}(z)=\sum_{m=0}^{n}\sum_{%
\begin{array}{c}
l_{1},l_{2},...,l_{v}\geq 0 \\ 
l_{1}+l_{2}+...+l_{v}=m%
\end{array}%
}\left( 
\begin{array}{c}
n \\ 
m%
\end{array}%
\right) \left( 
\begin{array}{c}
m \\ 
l_{1},l_{2},...,l_{v}%
\end{array}%
\right) z^{n-m}\prod_{j=1}^{v}E_{l_{j},q}^{(h)}.
\end{equation*}%
where $\left( 
\begin{array}{c}
n \\ 
l_{1},l_{2},...,l_{v}%
\end{array}%
\right) $ and $\left( 
\begin{array}{c}
n \\ 
m%
\end{array}%
\right) $ are the multinomial coefficient and the binomial coefficient,
respectively.
\end{corollary}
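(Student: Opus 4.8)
The plan is to chain together the two structural results just established, with no further integral computation. Theorem~\ref{new-theorem3}, in the form of (\ref{askk-1a0}), already expands the higher-order polynomial $E_{n,q}^{(h,v)}(z)$ as a $z$-expansion whose coefficients are the higher-order numbers, while Theorem~\ref{new-theorem4}, in the form of (\ref{askk-2a}), already evaluates each higher-order number as a multinomial convolution of the first-order numbers $E_{l_{j},q}^{(h)}$. The corollary is therefore obtained purely by substituting the second identity into the first, exactly as the preceding sentence in the text indicates.

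Concretely, I would first rewrite (\ref{askk-1a0}) with the summation index renamed from $l$ to $m$:
\begin{equation*}
E_{n,q}^{(h,v)}(z)=\sum_{m=0}^{n}\binom{n}{m}z^{n-m}\,E_{m,q}^{(h,v)}.
\end{equation*}
Next I would apply (\ref{askk-2a}) with $n$ replaced by $m$ to expand each coefficient, namely
\begin{equation*}
E_{m,q}^{(h,v)}=\sum_{\substack{l_{1},\dots,l_{v}\geq 0\\ l_{1}+\dots+l_{v}=m}}\binom{m}{l_{1},\dots,l_{v}}\prod_{j=1}^{v}E_{l_{j},q}^{(h)}.
\end{equation*}
Substituting this into the previous display and carrying the factor $\binom{n}{m}z^{n-m}$ inside the inner sum produces precisely the claimed double sum, with the binomial coefficient $\binom{n}{m}$ and the multinomial coefficient $\binom{m}{l_{1},\dots,l_{v}}$ surviving as the two weights and the constraint $l_{1}+\dots+l_{v}=m$ coming directly from Theorem~\ref{new-theorem4}.

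The only point requiring care, and presumably the reason the authors flag ``some elementary calculations,'' is the bookkeeping of the summation ranges: the inner constraint must read $l_{1}+\dots+l_{v}=m$ (the current outer index) rather than $=n$, and the binomial and multinomial weights must be kept distinct. There is no genuine analytic obstacle, and the identity holds for every $z\in\mathbb{C}_{p}$ and all positive integers $n,v$ with $h\in\mathbb{Z}$. As a cross-check, one could bypass the intermediate higher-order numbers and argue directly from the generating function $e^{tz}\bigl(2/(q^{h}e^{t}+1)\bigr)^{v}=\sum_{n\geq 0}E_{n,q}^{(h,v)}(z)\,t^{n}/n!$, expanding $e^{tz}$ and writing the $v$-fold product as a Cauchy convolution of the factors $2/(q^{h}e^{t}+1)=\sum_{n\geq 0}E_{n,q}^{(h)}t^{n}/n!$, and then matching the coefficient of $t^{n}/n!$ on both sides; this route makes both the binomial and the multinomial weights emerge naturally from the Cauchy-product structure and confirms the combinatorial factors independently.
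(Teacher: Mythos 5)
Your proposal is correct and is exactly the paper's argument: the text derives Corollary~\ref{corollary1} precisely by substituting (\ref{askk-2a}) into (\ref{askk-1a0}), and your index bookkeeping (renaming $l$ to $m$ and applying Theorem~\ref{new-theorem4} with $n$ replaced by $m$) supplies the ``elementary calculations'' the paper leaves implicit. The generating-function cross-check is a nice independent confirmation but goes beyond what the paper does.
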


One of the main theorem of this section is to give complete sum of products
of $(h,q)$-Euler polynomials.

\begin{theorem}
\label{new-theorem5}For $y_{1}$, $y_{2}$,..., $y_{v}\in \mathbb{C}_{p}$ and
positive integers $n$, $v$, we have%
\begin{equation}
E_{n,q}^{(h,v)}(y_{1}+y_{2}+...+y_{v})=\sum_{%
\begin{array}{c}
l_{1},l_{2},...,l_{v}\geq 0 \\ 
l_{1}+l_{2}+...+l_{v}=n%
\end{array}%
}\left( 
\begin{array}{c}
n \\ 
l_{1},l_{2},...,l_{v}%
\end{array}%
\right) \prod_{j=1}^{v}E_{l_{j},q}^{(h)}(y_{j}),  \label{ay-1}
\end{equation}%
where $\left( 
\begin{array}{c}
n \\ 
l_{1},l_{2},...,l_{v}%
\end{array}%
\right) $ is the multinomial coefficient.
\end{theorem}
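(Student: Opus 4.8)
The plan is to start from the integral representation of the higher-order polynomial furnished by Theorem~\ref{new-theorem2}, specialized to the shift $z = y_{1}+y_{2}+\cdots +y_{v}$, and then to distribute this single shift across the $v$ integration variables. Concretely, equation~(\ref{askk-1b}) gives
\[
E_{n,q}^{(h,v)}(y_{1}+\cdots +y_{v}) = \int_{\mathbb{Z}_{p}}\!\cdots\!\int_{\mathbb{Z}_{p}} q^{\sum_{j=1}^{v}hx_{j}}\Bigl( (y_{1}+\cdots +y_{v}) + \sum_{j=1}^{v} x_{j} \Bigr)^{\!n} \prod_{j=1}^{v} d\mu_{-1}(x_{j}).
\]
The key observation is that the argument of the $n$th power can be regrouped as $\sum_{j=1}^{v}(y_{j}+x_{j})$, pairing each shift $y_{j}$ with its own variable $x_{j}$; this regrouping is what converts the single shifted polynomial into a product structure.

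Next I would apply the Multinomial Theorem (Theorem~\ref{theoremMultiNomial}) to $\bigl(\sum_{j=1}^{v}(y_{j}+x_{j})\bigr)^{n}$, expanding it as $\sum_{l_{1}+\cdots +l_{v}=n}\binom{n}{l_{1},\ldots ,l_{v}}\prod_{j=1}^{v}(y_{j}+x_{j})^{l_{j}}$. Substituting into the integral, interchanging the finite sum with the integral, and using $q^{\sum hx_{j}}=\prod_{j}q^{hx_{j}}$ to split the exponential yields
\[
E_{n,q}^{(h,v)}(y_{1}+\cdots +y_{v}) = \sum_{\substack{l_{1}+\cdots +l_{v}=n}} \binom{n}{l_{1},\ldots ,l_{v}} \int_{\mathbb{Z}_{p}}\!\cdots\!\int_{\mathbb{Z}_{p}} \prod_{j=1}^{v} q^{hx_{j}}(y_{j}+x_{j})^{l_{j}} \prod_{j=1}^{v} d\mu_{-1}(x_{j}),
\]
so that the integrand is now a product of factors each depending on a single variable $x_{j}$.

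The crucial step --- and the one I expect to require the most care --- is the factorization of the $v$-fold integral into a product of single integrals, namely $\int\cdots\int \prod_{j} f_{j}(x_{j})\,\prod_{j} d\mu_{-1}(x_{j}) = \prod_{j}\int_{\mathbb{Z}_{p}} f_{j}(x_{j})\,d\mu_{-1}(x_{j})$ with $f_{j}(x_{j})=q^{hx_{j}}(y_{j}+x_{j})^{l_{j}}$. This is a Fubini-type property of the iterated fermionic $p$-adic integral, valid because $\prod_{j} d\mu_{-1}(x_{j})$ is a product measure and the integrand separates across the variables; it is exactly the same mechanism already invoked in the proof of Theorem~\ref{new-theorem4}, so I would justify it by that precedent rather than re-deriving it.

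Once the factorization is in hand, each single integral $\int_{\mathbb{Z}_{p}} q^{hx_{j}}(y_{j}+x_{j})^{l_{j}}\,d\mu_{-1}(x_{j})$ is precisely the Witt formula~(\ref{10-1}) of Theorem~\ref{theo01a}, and therefore equals $E_{l_{j},q}^{(h)}(y_{j})$. Collecting the factors produces $\prod_{j=1}^{v}E_{l_{j},q}^{(h)}(y_{j})$ inside the multinomial sum, which is exactly the right-hand side of~(\ref{ay-1}), completing the argument.
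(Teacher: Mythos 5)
Your proposal is correct and follows the paper's proof essentially step for step: substitute $z=y_{1}+\cdots +y_{v}$ into~(\ref{askk-1b}), regroup the $n$th power as $\bigl(\sum_{j=1}^{v}(y_{j}+x_{j})\bigr)^{n}$, expand via Theorem~\ref{theoremMultiNomial}, factor the iterated integral, and finish with the Witt formula~(\ref{10-1}). The only difference is cosmetic: you spell out the Fubini-type factorization that the paper compresses into ``after some elementary calculations.''
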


\begin{proof}
By substituting $z=y_{1}+y_{2}+...+y_{v}$ into (\ref{askk-1b}), we have%
\begin{equation*}
E_{n,q}^{(h,v)}(y_{1}+y_{2}+...+y_{v})=\int_{\mathbb{Z}_{p}}...\int_{\mathbb{%
Z}_{p}}q^{\sum_{j=1}^{v}hx_{j}}\left( \sum_{j=1}^{v}(y_{j}+x_{j})\right)
^{n}\prod_{j=1}^{v}d\mu _{-1}(x_{j}).
\end{equation*}%
By using Theorem \ref{theoremMultiNomial} in the above, and after some
elementary calculations, we get%
\begin{eqnarray*}
&&E_{n,q}^{(h,v)}(y_{1}+y_{2}+...+y_{v}) \\
&=&\sum_{%
\begin{array}{c}
l_{1},l_{2},...,l_{v}\geq 0 \\ 
l_{1}+l_{2}+...+l_{v}=n%
\end{array}%
}\left( 
\begin{array}{c}
n \\ 
l_{1},l_{2},...,l_{v}%
\end{array}%
\right) \prod_{j=1}^{v}\int_{\mathbb{Z}_{p}}q^{hx_{j}}(y_{j}+x_{j})^{l_{j}}d%
\mu _{-1}(x_{j}).
\end{eqnarray*}%
By substituting (\ref{10-1}) into the above, we arrive at the desired result.
\end{proof}

\begin{remark}
If we take $y_{1}=y_{2}=...=y_{v}=0$ in Theorem \ref{new-theorem5}, then
Theorem \ref{new-theorem5} reduces to Theorem \ref{new-theorem4}.
Substituting $h=1$ and $q\rightarrow 1$ into (\ref{ay-1}), we obtain the
following relation:%
\begin{equation*}
E_{n}^{(v)}(y_{1}+y_{2}+...+y_{v})=\sum_{%
\begin{array}{c}
l_{1},l_{2},...,l_{v}\geq 0 \\ 
l_{1}+l_{2}+...+l_{v}=m%
\end{array}%
}\left( 
\begin{array}{c}
m \\ 
l_{1},l_{2},...,l_{v}%
\end{array}%
\right) \prod_{j=1}^{v}E_{l_{j}}(y_{j}).
\end{equation*}%
I-C. Huang and S-Y. Huang\cite{Huang} found complete sums of products of
Bernoulli polynomials. Kim\cite{KimArchivMath} defined Carlitz's $q$%
-Bernoulli number of higher order using an integral by the $q$-analogue $\mu
_{q}$ of the ordinary $p$-adic invariant measure. He gave different proof of
complete sums of products of higher order $q$-Bernoulli polynomials. In \cite%
{jangParkRo}, Jang et al gave complete sums of products of Bernoulli
polynomials and Frobenious Euler polynomials. In \cite{simsek-kurt-dkim},
Simsek et al gave complete sums of products of $(h,q)$-Bernoulli polynomials
and numbers.
\end{remark}

By applying the following multinomial relations cf. (\cite{comtet} pp. 25,
56), (\cite{knuth} pp. 168)%
\begin{eqnarray*}
(x+y+z)^{n} &=&\sum_{%
\begin{array}{c}
0\leq l_{1},l_{2},l_{3}\leq n \\ 
l_{1}+l_{2}+l_{3}=n%
\end{array}%
}\frac{(l_{1}+l_{2}+l_{3})!}{l_{1}!l_{2}!l_{3}!}x^{l_{1}}y^{l_{2}}z^{l_{3}}
\\
&=&\sum_{%
\begin{array}{c}
0\leq l_{1},l_{2},l_{3}\leq n \\ 
l_{1}+l_{2}+l_{3}=n%
\end{array}%
}\left( 
\begin{array}{c}
l_{1}+l_{2}+l_{3} \\ 
l_{2}+l_{3}%
\end{array}%
\right) \left( 
\begin{array}{c}
l_{2}+l_{3} \\ 
l_{3}%
\end{array}%
\right) x^{l_{1}}y^{l_{2}}z^{l_{3}},
\end{eqnarray*}%
and%
\begin{eqnarray*}
\left( 
\begin{array}{c}
l_{1}+l_{2}+...+l_{v} \\ 
l_{1},l_{2},...,l_{v}%
\end{array}%
\right) &=&\frac{(l_{1}+l_{2}+...+l_{v})!}{l_{1}!l_{2}!...l_{v}!} \\
&=&\left( 
\begin{array}{c}
l_{1}+l_{2}+l_{3}+...+l_{v} \\ 
l_{2}+l_{3}+...+l_{v}%
\end{array}%
\right) ...\left( 
\begin{array}{c}
l_{v-1}+l_{v} \\ 
l_{v}%
\end{array}%
\right) .
\end{eqnarray*}%
and the above method, one can also evaluate the other sums related to
Bernoulli Polynomials or Euler Polynomials (see \cite{Huang}, \cite%
{KimArchivMath}, \cite{simsek-kurt-dkim}).

\begin{theorem}
\label{new-theorem6}Let $n\in \mathbb{N}$. Then we have%
\begin{equation*}
E_{n,q}^{(h,v)}(z+y)=\sum_{l=0}^{n}\left( 
\begin{array}{c}
n \\ 
l%
\end{array}%
\right) E_{l,q}^{(h,v)}(y)z^{n-l}.
\end{equation*}
\end{theorem}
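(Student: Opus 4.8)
The plan is to imitate the proof of Theorem \ref{new-theorem3}, using the $v$-fold fermionic integral representation of Theorem \ref{new-theorem2} together with the ordinary binomial theorem. First I would substitute $z+y$ in place of $z$ in (\ref{askk-1b}), which gives
\begin{equation*}
E_{n,q}^{(h,v)}(z+y)=\int_{\mathbb{Z}_{p}}...\int_{\mathbb{Z}_{p}}q^{\sum_{j=1}^{v}hx_{j}}\left( z+y+\sum_{j=1}^{v}x_{j}\right)^{n}\prod_{j=1}^{v}d\mu _{-1}(x_{j}).
\end{equation*}

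The key step is to regroup the argument of the $n$th power as the sum of the single term $z$ and the block $y+\sum_{j=1}^{v}x_{j}$, and then apply the binomial theorem in the form $\left( A+z\right)^{n}=\sum_{l=0}^{n}\binom{n}{l}A^{l}z^{n-l}$ with $A=y+\sum_{j=1}^{v}x_{j}$. Since $z$ does not depend on the integration variables $x_{1},\dots ,x_{v}$, the factor $z^{n-l}$ pulls outside the multiple integral by linearity, leaving
\begin{equation*}
E_{n,q}^{(h,v)}(z+y)=\sum_{l=0}^{n}\binom{n}{l}z^{n-l}\int_{\mathbb{Z}_{p}}...\int_{\mathbb{Z}_{p}}q^{\sum_{j=1}^{v}hx_{j}}\left( y+\sum_{j=1}^{v}x_{j}\right)^{l}\prod_{j=1}^{v}d\mu _{-1}(x_{j}).
\end{equation*}
Recognizing the remaining $v$-fold integral as exactly $E_{l,q}^{(h,v)}(y)$ by (\ref{askk-1b}) then yields the claimed identity.

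I do not expect any genuine obstacle here; the argument is formally the same as Theorem \ref{new-theorem3} (which is essentially the special case $y=0$), the only subtlety being to split off the scalar $z$ rather than the whole sum $z+\sum x_{j}$. An alternative route avoids the integral entirely and works purely with the generating function $F_{q,w}^{h,v}(z,t)=e^{tz}\left( \tfrac{2}{q^{h}e^{t}+1}\right)^{v}$: the factorization $e^{t(z+y)}=e^{tz}e^{ty}$ gives $F_{q,w}^{h,v}(z+y,t)=e^{tz}F_{q,w}^{h,v}(y,t)$, and expanding $e^{tz}=\sum_{m\geq 0}z^{m}t^{m}/m!$ and forming the Cauchy product with $\sum_{l\geq 0}E_{l,q}^{(h,v)}(y)t^{l}/l!$ reproduces the binomial convolution after comparing coefficients of $t^{n}/n!$. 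Either approach completes the proof.
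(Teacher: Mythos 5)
Your proof is correct, but it takes a genuinely different route from the paper. The paper proves this theorem symbolically: it writes $E_{n,q}^{(h,v)}(z+y)=\left(E_{q}^{(h,v)}+z+y\right)^{n}$ under the usual umbral convention of replacing powers of $E_{q}^{(h,v)}$ by the numbers $E_{l,q}^{(h,v)}$, expands $(y+z)^{n-l}$ by a second binomial expansion, shifts the inner index, swaps the order of summation using the identity $\binom{n}{l}\binom{n-l}{m-l}=\binom{n}{m}\binom{m}{l}$, and only at the end invokes Theorem \ref{new-theorem3} (equation (\ref{askk-1a0})) to recognize the inner sum as $E_{m,q}^{(h,v)}(y)$. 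You instead go straight to the $v$-fold fermionic integral representation (\ref{askk-1b}), substitute $z+y$, split off the scalar $z$ with a single application of the binomial theorem, and read off the remaining integral as $E_{l,q}^{(h,v)}(y)$ again by (\ref{askk-1b}); this is exactly parallel to the paper's own proof of Theorem \ref{new-theorem3} and of Theorem \ref{new-theorem5}, just with the split $z$ versus $y+\sum_{j}x_{j}$ instead of $z$ versus $\sum_{j}x_{j}$. Your route is shorter and avoids the double-sum index gymnastics entirely (in fact the paper's computation contains a small slip: after the shift the inner sum should run $\sum_{m=l}^{n}$ rather than $\sum_{m=l}^{n-l}$ as printed, though the final result is unaffected); what the paper's route buys is a purely combinatorial derivation from Theorem \ref{new-theorem3} that never re-opens the integral, illustrating the umbral formalism. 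Your generating-function alternative, factoring $e^{t(z+y)}=e^{tz}e^{ty}$ and taking the Cauchy product, is also valid and is perhaps the cleanest of the three.
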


\begin{proof}
\begin{eqnarray*}
E_{n,q}^{(h,v)}(z+y) &=&\left( E_{q}^{(h,v)}+z+y\right) ^{n} \\
&=&\sum_{l=0}^{n}\left( 
\begin{array}{c}
n \\ 
l%
\end{array}%
\right) E_{l,q}^{(h,v)}(y+z)^{n-l} \\
&=&\sum_{l=0}^{n}\left( 
\begin{array}{c}
n \\ 
l%
\end{array}%
\right) E_{l,q}^{(h,v)}\sum_{m=0}^{n-l}\left( 
\begin{array}{c}
n-l \\ 
m%
\end{array}%
\right) y^{m}z^{n-l-m} \\
&=&\sum_{l=0}^{n}\left( 
\begin{array}{c}
n \\ 
l%
\end{array}%
\right) E_{l,q}^{(h,v)}\sum_{m=l}^{n-l}\left( 
\begin{array}{c}
n-l \\ 
m-l%
\end{array}%
\right) y^{m-l}z^{n-m} \\
&=&\sum_{0\leq l\leq m\leq n}\left( 
\begin{array}{c}
n \\ 
l%
\end{array}%
\right) \left( 
\begin{array}{c}
n-l \\ 
m-l%
\end{array}%
\right) E_{l,q}^{(h,v)}y^{m-l}z^{n-m} \\
&=&\sum_{0\leq l\leq m\leq n}\left( 
\begin{array}{c}
n \\ 
m%
\end{array}%
\right) \left( 
\begin{array}{c}
m \\ 
l%
\end{array}%
\right) E_{l,q}^{(h,v)}y^{m-l}z^{n-m} \\
&=&\sum_{m=0}^{n}\left( 
\begin{array}{c}
n \\ 
m%
\end{array}%
\right) \left( \sum_{l=0}^{m}\left( 
\begin{array}{c}
m \\ 
l%
\end{array}%
\right) E_{l,q}^{(h,v)}y^{m-l}\right) z^{n-m},
\end{eqnarray*}%
with usual convention of symbolically replacing $E_{q}^{l(h,v)}$ by $%
E_{l,q}^{(h,v)}$. By using (\ref{askk-1a0}) in the above, we have%
\begin{equation*}
E_{n,q}^{(h,v)}(z+y)=\sum_{m=0}^{n}\left( 
\begin{array}{c}
n \\ 
m%
\end{array}%
\right) E_{m,q}^{(h,v)}(y)z^{n-m}.
\end{equation*}%
Thus the proof is completed.
\end{proof}

From Theorem\ref{new-theorem5} and Theorem\ref{new-theorem6}, after some
elementary calculations, we arrive at the following interesting result:

\begin{corollary}
Let $n\in \mathbb{N}$. Then we have%
\begin{equation*}
\sum_{m=0}^{n}\left( 
\begin{array}{c}
n \\ 
m%
\end{array}%
\right) E_{m,q}^{(h,v)}(y_{1})y_{2}^{n-m}=\sum_{%
\begin{array}{c}
l_{1},l_{2}\geq 0 \\ 
l_{1}+l_{2}=n%
\end{array}%
}\left( 
\begin{array}{c}
n \\ 
l_{1},l_{2}%
\end{array}%
\right) E_{l_{1},q}^{(h)}(y_{1})B_{l_{2},q}^{(h)}(y_{2}).
\end{equation*}
\end{corollary}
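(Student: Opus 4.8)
The key observation behind the plan is that, when the order is taken equal to the number of arguments (here $v=2$), both sides of the asserted identity are expansions of one and the same quantity, namely the higher-order polynomial $E_{n,q}^{(h,2)}(y_1+y_2)$. The strategy is therefore to reach this common value from each side separately, using the two immediately preceding results, and then to equate the two expansions.

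For the left-hand side I would invoke Theorem~\ref{new-theorem6} with the substitution $z \mapsto y_2$, $y \mapsto y_1$, and the index renaming $l \mapsto m$, which gives
\[
E_{n,q}^{(h,2)}(y_1+y_2)=\sum_{m=0}^{n}\binom{n}{m}E_{m,q}^{(h,2)}(y_1)\,y_2^{\,n-m}.
\]
For the right-hand side I would apply the complete-sum-of-products formula, Theorem~\ref{new-theorem5}, in its $v=2$ instance with arguments $y_1,y_2$, namely
\[
E_{n,q}^{(h,2)}(y_1+y_2)=\sum_{\substack{l_1,l_2\ge 0\\ l_1+l_2=n}}\binom{n}{l_1,l_2}E_{l_1,q}^{(h)}(y_1)\,E_{l_2,q}^{(h)}(y_2),
\]
the multinomial coefficient being furnished by Theorem~\ref{theoremMultiNomial}. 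Since both displays compute the single value $E_{n,q}^{(h,2)}(y_1+y_2)$, equating their right-hand sides produces the stated identity. Each of these steps is a direct substitution into an already-proved formula, so I anticipate no computational difficulty there.

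The single genuine obstacle --- and the step I would scrutinize most carefully --- concerns the second factor on the right. Both Theorem~\ref{new-theorem5} and Theorem~\ref{new-theorem6} are phrased exclusively in terms of the $(h,q)$-Euler polynomials, so the factor that the $v=2$ expansion delivers is $E_{l_2,q}^{(h)}(y_2)$, whereas the corollary as printed displays the $(h,q)$-Bernoulli polynomial $B_{l_2,q}^{(h)}(y_2)$. No Bernoulli object appears anywhere among the results I am permitted to cite, so the argument cannot be closed with this Euler-only toolkit unless one either reads the symbol $B$ as a typographical slip for $E$ --- in which case the two displays above finish the proof verbatim --- or adjoins a separate lemma converting $E_{l_2,q}^{(h)}(y_2)$ into $B_{l_2,q}^{(h)}(y_2)$ on the relevant index range. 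I would therefore treat the reconciliation of this last factor as the crux of the argument: the surrounding manipulation is a two-line substitution, and the real content lies in deciding which of these two readings of the corollary is intended.
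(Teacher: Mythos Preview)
Your approach is exactly the one the paper intends: the corollary is introduced with the sentence ``From Theorem~\ref{new-theorem5} and Theorem~\ref{new-theorem6}, after some elementary calculations,'' and your two displayed substitutions are precisely those elementary calculations. Your diagnosis of the symbol $B_{l_2,q}^{(h)}(y_2)$ as a misprint for $E_{l_2,q}^{(h)}(y_2)$ is correct --- no $(h,q)$-Bernoulli polynomial is defined anywhere in the paper, and Theorem~\ref{new-theorem5} delivers only Euler factors --- so the proof closes once that symbol is read as $E$. You might add, for completeness, that the superscript $v$ on the left-hand side of the corollary must likewise be read as $2$ (the right-hand side carries only two factors, forcing $v=2$ in both theorems), a point you use implicitly but do not flag as a second typo.
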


\begin{acknowledgement}
The author is supported by the research fund of Akdeniz University.
\end{acknowledgement}

\end{document}